\documentclass{amsart}
\usepackage{amscd,amsfonts,amssymb,amsmath}
\usepackage[margin=3.9cm]{geometry}

\usepackage{amsmath}

\newtheorem{theorem}{Theorem}[section]
\newtheorem{cor}[theorem]{Corollary}
\newtheorem{lemma}[theorem]{Lemma}
\newtheorem{prop}[theorem]{Proposition}

\theoremstyle{definition}

\theoremstyle{remark}

\numberwithin{equation}{subsection}
\theoremstyle{plain}

\newtheorem{problem}{Problem}

\newcommand{\secref}[1]{Section~\ref{#1}}
\newcommand{\thmref}[1]{Theorem~\ref{#1}}
\newcommand{\lemref}[1]{Lemma~\ref{#1}}

\newcommand{\propref}[1]{Proposition~\ref{#1}}
\newcommand{\corref}[1]{Corollary~\ref{#1}}

\newcommand{\eqnref}[1]{~{\textrm(\ref{#1})}}
\numberwithin{equation}{section}

\begin{document}

\title[PALINDROMIC WIDTH OF FREE NILPOTENT GROUPS]{PALINDROMIC WIDTH OF FREE NILPOTENT GROUPS}

\author{Valeriy ~G.~Bardakov}
\address{Sobolev Institute of Mathematics, Novosibirsk, 630090, Russia}
\email{bardakov@math.nsc.ru}
\author{Krishnendu Gongopadhyay}
\address{Department of Mathematical Sciences, Indian Institute of Science Education and Research (IISER) Mohali,
Knowledge City, Sector 81, S.A.S. Nagar, P.O. Manauli 140306, India}
\email{krishnendu@iisermohali.ac.in, krishnendug@gmail.com}
%\thanks{Gongopadhyay acknowledges the support of SERC-DST FAST  grant
%{SR/FTP/MS-004/2010}. }
\subjclass[2000]{Primary 20F18; Secondary 20D15,20E05}
\keywords{palindromic width, free nilpotent groups}

\thanks{\bf The authors gratefully acknowledge the support of the Indo-Russian DST-RFBR project grant DST/INT/RFBR/P-137}

\date{December 2, 2013}

%\tableofcontents

\begin{abstract}
In this paper we consider the palindromic width of free nilpotent groups. In
particular, we prove that the palindromic width of a finitely generated free
nilpotent group is finite. We also prove that the palindromic width of a free abelian-by-nilpotent group is finite. 
\end{abstract}
\maketitle

\section{Introduction}
Let $G$ be a group with a set of generators $A$. A reduced word in the alphabet $A^{\pm 1}$
is a \emph{palindrome} if it reads the same forwards and backwards. The palindromic length $l_{\mathcal P}(g)$
of an element $g$ in $G$ is the minimum number $k$ such that $g$ can be expressed as a product of $k$
palindromes. The \emph{palindromic width} of $G$ with respect to $A$ is defined to be ${\rm pw}(G) = \underset{g \in G}{\sup} \ l_{\mathcal{P}}(g)$.
In analogy with commutator width of groups (for example see \cite{MR, AR, AR1, B}),
it is a problem of potential interest to study palindromic width of groups.
Palindromes of free groups have already proved useful in studying various aspects
of combinatorial group theory, for example see \cite{col, gj, hel}.
In \cite{BST}, it was proved that the palindromic width of  a non-abelian free
group is infinite.  This result was generalized in \cite{BT} where
the authors proved that almost all free products have infinite palindromic
width; the only exception is given by the free product of two cyclic groups of
order two, when the palindromic width is two. Piggot \cite{P} studied the relationship between primitive words and palindromes in free groups of rank two. It follows from \cite{BST, P} that up to conjugacy, a primitive word can
always be written as either a palindrome or a product of two palindromes and that certain pairs of
palindromes will generate the group. Recently Gilman and Keen \cite{gk1, gk2} have used tools from hyperbolic geometry to reprove this
result and further have obtained discreteness criteria for two generator subgroups in ${\rm PSL}(2, \mathbb C)$ using the geometry of
palindromes. The work of Gilman and Keen indicates a deep connection between palindromic width of groups and geometry.

Let ${\rm N}_{n, r} $ be the free nilpotent group of rank $n$ and of step $r$. In this paper we consider the palindromic width of free nilpotent groups. We prove that the palindromic width of a finitely generated free
nilpotent group is finite. In fact, we prove that the palindromic width of an arbitrary rank $n$ free nilpotent group is bounded by $3n$. For the $2$-step free nilpotent groups, we improve this bound. For the groups, ${\rm N}_{n, 1}$ and ${\rm N}_{2,2}$
we get the exact values of the palindromic width.   Our main theorem is the following.

\begin{theorem}\label{mainth}
Let ${\rm N}_{n, r} $ be the free $r$-step nilpotent group of rank $n \geq 2$. Then the  following holds:
\begin{enumerate}
\item{ The palindromic width ${\rm pw}({\rm N}_{n, 1})$ of a free abelian group of rank $n$  is equal to $n$. }
\item{For $r \geq 2$,
 $n \leq {\rm pw}({\rm N}_{n, r} ) \leq 3n$. }
\item{${\rm pw}({\rm N}_{n, 2}) \leq 3 (n-1)$. }
\end{enumerate}
\end{theorem}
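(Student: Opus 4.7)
The plan is to handle (1) first, then address the lower and upper bounds of (2) and (3).

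For (1), the upper bound is immediate: each $a_i^{k_i}$ is itself a palindrome, so any element $a_1^{k_1}\cdots a_n^{k_n}$ of the free abelian group ${\rm N}_{n,1}$ is a product of $n$ palindromes. For the lower bound, I pass to the mod-$2$ quotient ${\rm N}_{n,1} \twoheadrightarrow (\mathbb{Z}/2\mathbb{Z})^n$: an even-length palindrome $w_1\cdots w_k w_k\cdots w_1$ has trivial image (each letter appears twice), while an odd-length palindrome $w_1\cdots w_k w_{k+1} w_k\cdots w_1$ reduces to the image of its middle letter. Hence every palindrome has image of Hamming weight at most one, a product of $k$ palindromes has image of Hamming weight at most $k$, and the element $a_1 a_2 \cdots a_n$, of image $(1,1,\ldots,1)$, forces ${\rm pw}({\rm N}_{n,1}) \geq n$. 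The lower bound in (2) then follows by projecting to the abelianization ${\rm N}_{n,r}^{\mathrm{ab}} \cong {\rm N}_{n,1}$, which sends palindromes to palindromes.

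For the upper bounds in (2) and (3), I would work with the Malcev normal form $g = a_1^{k_1}\cdots a_n^{k_n}\cdot c$ of an element $g \in {\rm N}_{n,r}$, where $c$ lies in the commutator subgroup, and exploit the identity
\[
a_i\cdot a_j^m\cdot a_i \;=\; a_i^2\, a_j^m\, [a_i, a_j]^{-m},
\]
valid in ${\rm N}_{n,2}$ because $[a_i,a_j]$ is central there. The left-hand side is a palindrome, so a single palindrome provides arbitrary integer powers of the commutator $[a_i,a_j]$ together with a controllable abelian tail. The base case $n=2$ of (3) is then direct: given $g = a_1^{k_1}a_2^{k_2}[a_1,a_2]^m \in {\rm N}_{2,2}$, set $P = a_1\,a_2^{m + 2k_2}\,a_1$ and verify via the identity above that $g = a_1^{k_1 - 2}\cdot a_2^{-m - k_2}\cdot P$, a product of three palindromes. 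The bound $3(n-1)$ follows by induction on $n$: decompose $g\in {\rm N}_{n,2}$ as $g = h_1\cdot h_2$, where $h_1$ lies in the subgroup generated by $a_1,\ldots,a_{n-1}$ (handled by induction in $3(n-2)$ palindromes) and $h_2 = a_n^{k_n}\prod_{i<n}[a_i,a_n]^{m_{in}}$ is handled by three further palindromes that generalize the base case. For $r \geq 3$, analogous mixed palindromes at each layer of the lower central series absorb the deeper commutator content, yielding the uniform bound $3n$.

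The main obstacle I anticipate is controlling the ``parasitic'' commutators. A direct computation in ${\rm N}_{n,2}$ shows that the mixed palindrome $w\cdot a_n^s\cdot \bar w$ with $w = a_1^{t_1}\cdots a_{n-1}^{t_{n-1}}$ and $\bar w = a_{n-1}^{t_{n-1}}\cdots a_1^{t_1}$ evaluates to
\[
w\cdot a_n^s\cdot \bar w \;=\; a_1^{2 t_1}\cdots a_{n-1}^{2 t_{n-1}}\, a_n^{s} \prod_{i<n}[a_i,a_n]^{-s t_i} \prod_{i<j<n}[a_i,a_j]^{-2 t_i t_j},
\]
so in addition to the desired ``new'' commutators $[a_i,a_n]$ one picks up unwanted cross-commutators $[a_i,a_j]^{-2 t_i t_j}$ for $i<j<n$, together with further reordering terms that arise when several such palindromes are multiplied. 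Choosing the integers $t_i,s$ across the budget of $3(n-1)$ palindromes so that every parasitic term cancels while simultaneously matching the integer exponents of the normal form of $g$ is the technical crux, and a parallel analysis governs the additional palindromes required for each extra layer of the lower central series in part (2).
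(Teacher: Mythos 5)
Your part (1) is correct and is exactly the paper's argument (the mod-$2$ quotient detecting that each palindrome has Hamming weight at most one), and your lower bound for (2) via the abelianization matches the paper as well. Your explicit base case for ${\rm N}_{2,2}$ also checks out: with $P = a_1 a_2^{m+2k_2} a_1 = a_1^2 a_2^{m+2k_2}[a_1,a_2]^{-(m+2k_2)}$ one indeed recovers $g = a_1^{k_1-2} a_2^{-m-k_2} P$, which is essentially the paper's computation for ${\rm pw}({\rm N}_{2,2}) \leq 3$.

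However, the upper bounds in (2) and (3) have a genuine gap, and you name it yourself: you do not actually show how to cancel the parasitic cross-commutators $[a_i,a_j]^{-2t_it_j}$, $i<j<n$, produced by a mixed palindrome $w\,a_n^s\,\overline{w}$, nor do you exhibit the three palindromes handling $h_2 = a_n^{k_n}\prod_{i<n}[a_i,a_n]^{m_{in}}$. The missing idea is that no exponent-matching is needed at all: for any group and any word $u$ one has the exact identity
\[
[u, a^{\alpha}]\,a^{\beta} \;=\; \bigl(u^{-1} a^{-\alpha} \overline{u^{-1}}\bigr)\cdot\bigl(\overline{u}\,u\bigr)\cdot a^{\alpha+\beta},
\]
where $\overline{u}$ is the reverse of $u$; all three factors are palindromes by inspection, so $[u,a^{\alpha}]a^{\beta}$ is always a product of $3$ palindromes with no residue to cancel (this is Lemma~\ref{l:32}(3) in the paper). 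Since in ${\rm N}_{n,2}$ one has $h_2 = [v, a_n]\,a_n^{k_n}$ with $v = \prod_{i<n} a_i^{m_{in}}$, this closes your inductive step for (3). The second, independent gap concerns (2) for $r \geq 3$: your proposal to ``absorb the deeper commutator content'' layer by layer has no mechanism to keep the count at $3n$ uniformly in $r$ --- naively it grows with the nilpotency class. The paper's bound rests on the nontrivial external result of Allambergenov--Roman'kov (Lemma~\ref{ar1}) that \emph{every} element of ${\rm N}_{n,r}'$ is a product of exactly $n$ commutators of the special form $[u_1,x_1]\cdots[u_n,x_n]$; combined with an induction on $r$ using Lemma~\ref{reh} this yields the normal form $g = \prod_i [u_i,x_i]x_i^{\alpha_i}$ with $n$ factors regardless of $r$, and then the displayed identity gives $3n$. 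Without that input (or a substitute for it) your sketch does not establish the uniform bound.
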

We prove the theorem in \secref{fn1}. Along the way, we also prove ${\rm pw}({\rm N}_{2,2})=3$. In \secref{prel}, after recalling some basic notions and related basic results, we prove \lemref{l:32} which is a key ingredient in the proof of \thmref{mainth}. As a consequence of \lemref{l:32} we also prove that the palindromic width of a free abelian-by-nilpotent group of rank $n$ is bounded by $5n$, see \propref{fabn}. 
 For the group ${\rm N}_{3,2}$ it is possible to improve the bound given in (2) of the above theorem. In fact, 
\hbox{$4 \leq {\rm pw}({\rm N}_{3, 2}) \leq 6$}. A detailed proof of this fact will appear elsewhere. It would be interesting to obtain solutions to the following problems. 
\begin{problem}\label{prob1}
\begin{enumerate}
 \item{For $n \geq 3$, $r \geq 2$, find ${\rm pw}({\rm N}_{n,r} )$.}
\item{Construct an algorithm that determines $l_{\mathcal P} (g)$ for arbitrary $g \in {\rm N}_{n, r} $.}
\end{enumerate}
\end{problem}
The above problem can be asked for any other groups as well. In general, the palindromic width of an
arbitrary group depends on the generating set of the group. However, the advantage of working with free
nilpotent groups is that, in this case we have a basis and hence, the palindromic
width is the same regardless of the choice of a basis as a generating set. 

\medskip For $g$, $h$ in $G$, the \emph{commutator} of $g$ and $h$ is defined as $[g,
h]=g^{-1} h^{-1}gh$. If $\mathcal{C}$ is the set of commutators in some group
$G$ then the commutator  subgroup $G'$ is generated by $\mathcal{C}$.
The length $l_{\mathcal{C}}(g)$ of an element $g \in G'$ is called the
\emph{commutator length}.  The width
${\rm wid}(G', \mathcal{C})$ is called the \emph{commutator width} of $G$ and is
denoted by ${\rm cw}(G)$. It is well known \cite{R} that the commutator width of a
free non-abelian group is infinite, but the commutator width of a finitely
generated nilpotent group is finite (see \cite{AR,AR1}). An algorithm of the
computation of the commutator length in free non-abelian groups can be found in
\cite{B}. The following problem is natural to ask. 

\begin{problem}
Is it true that for a finitely generated group $G = \langle A \rangle$, the
palindromic width ${\rm pw}(G)$ is finite if and only if the commutator width ${\rm cw}(G)$
is finite?
\end{problem}

\section{Background and Preliminary Results} \label{prel}
 \subsection{Background} \ \
\subsubsection{Widths of groups.} Let $G$ be a group and $A \subseteq G$ a subset
that generates $G$. For each $g \in G$ define the {\it length} $l_A(g)$ of $g$
with respect to $A$ to be the minimal $k$ such that $g$ is a product of $k$
elements of $A^{\pm 1}$. The supremum of the values $l_A(g),$ $g \in G$, is
called the \emph{ width} of $G$ with respect to $A$ and is denoted by ${\rm wid}(G,
A)$. In particular,
${\rm wid}(G, A)$ is either a natural number or $\infty$. If ${\rm wid}(G, A)$
is a natural number, then every element of $G$ is a product of at
most ${\rm wid}(G, A)$ elements of $A$.

Let $A$ be a set of generators of a group $G$.  A reduced word $w$ in the
alphabet $A^{\pm 1}$ is called a {\it palindrome} if $w$ reads the same
left-to-right and right-to-left. An element $g$ of $G$ is called a
\emph{palindrome} if $g$ can be represented by some word $w$ that is a
palindrome in the alphabet $A^{\pm 1}$. We denote the set of all palindromes in
$G$ by $\mathcal{P} = \mathcal{P}(G)$. Evidently,  the set $\mathcal{P}$
generates $G$. Then any element $g \in G$ is a product of palindromes
$$
g = p_1 p_2 \ldots p_k.
$$
The minimal $k$ with this property is called the \emph{palindromic length} of $g$ and
is denoted by $l_{\mathcal{P}}(g)$.  The \emph{palindromic width} of $G$ is then given by
$$
{\rm pw}(G) = {\rm wid}(G, \mathcal{P}) = \underset{g \in G}{\sup} \ l_{\mathcal{P}}(g).
$$

\subsubsection{Free Nilpotent Groups} Let ${\rm N}_{n,r}$ be the free $r$-step nilpotent group of rank $n$ with a basis $x_1, \ldots,
x_n$.
 For example, when $r=1$,
${\rm N}_{n,1}$ is simply the free abelian group generated by $x_1, \ldots, x_n$, so
every element of ${\rm N}_{n,1}$ can be presented uniquely as
 $$g=x_1^{\alpha_1} \ldots x_n^{\alpha_n}$$
 for some integers $\alpha_1, \ldots, \alpha_n$. For $r=2$,  every element $g \in {\rm N}_{n,2}$ has the
form

 \begin{equation}\label{fr1}g=\prod_{i=1}^n x_i^{\alpha_i} \cdot \prod_{1 \leq j < i
\leq n} [x_i, x_j]^{\beta_{ij}} \end{equation}
 for some integers $\alpha_i$  and $\beta_{ij}$, where $[x_i, x_j] = x_i^{-1} x_j^{-1} x_i x_j$
 are basic commutators (see  \cite[Chapter 5]{MKS}).

\medskip For the free nilpotent group ${\rm N}_{n, r} $,  let ${\rm N}_{n, r} '$ be its commutator subgroup. We
note the following lemmas that will be used later. 
\begin{lemma} \label{ar1} \cite{AR1, MR}
Any element  $g$ in the commutator subgroup ${\rm N}_{n,r}'$ can be
represented in the form
$$
g = [u_1, x_1] \, [u_2, x_2] \, \ldots \, [u_n, x_n],~~~u_i \in {\rm N}_{n,r}.
$$
\end{lemma}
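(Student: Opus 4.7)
The plan is to prove the lemma by induction on the nilpotency class $r$, exploiting the identification ${\rm N}_{n,r}/\gamma_r({\rm N}_{n,r}) \cong {\rm N}_{n,r-1}$ and the fact that $\gamma_r({\rm N}_{n,r})$ is central in ${\rm N}_{n,r}$. Write $N = {\rm N}_{n,r}$ for brevity. The base case $r = 1$ is trivial, as $N' = 1$. For the inductive step I would project $g \in N'$ to $\bar g \in {\rm N}_{n,r-1}'$, apply the inductive hypothesis there, and lift back to $N$ to obtain
$$
g = [v_1, x_1] [v_2, x_2] \cdots [v_n, x_n] \cdot z, \qquad z \in \gamma_r(N),
$$
for some $v_1, \ldots, v_n \in N$. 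The remaining task is to absorb the error $z \in \gamma_r(N)$ into the $n$ commutators already present.

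The key intermediate step is a sublemma: every $z \in \gamma_r(N)$ admits a factorisation $z = [w_1, x_1] \cdots [w_n, x_n]$ with $w_i \in \gamma_{r-1}(N)$. The strategy is to exploit centrality of $\gamma_r(N)$ (which follows from $\gamma_{r+1}(N) = 1$) in two complementary ways. First, for each fixed $i$, the map $c \mapsto [c, x_i]$ becomes a genuine homomorphism $\gamma_{r-1}(N) \to \gamma_r(N)$; second, $\gamma_r(N) = [\gamma_{r-1}(N), N]$ together with the generating set $\{x_1, \ldots, x_n\}$ of $N$ shows that the abelian group $\gamma_r(N)$ is generated by commutators of the form $[c, x_i]$ with $c \in \gamma_{r-1}(N)$. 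Writing a given $z$ as a product of such generators, regrouping by the index $i$ (allowed because $\gamma_r(N)$ is abelian), and collapsing each group via the homomorphism property yields the desired factorisation with exactly one $[w_i, x_i]$ per generator $x_i$.

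Granting the sublemma, the finishing step is a commutator-identity exercise. Each $[w_i, x_i] \in \gamma_r(N)$ is central, so these factors can be shuffled through the product to produce
$$
g = [v_1, x_1][w_1, x_1] \, [v_2, x_2][w_2, x_2] \cdots [v_n, x_n][w_n, x_n].
$$
Applying $[v_i w_i, x_i] = [v_i, x_i]^{w_i} [w_i, x_i]$ together with $[\gamma_2(N), \gamma_{r-1}(N)] \subseteq \gamma_{r+1}(N) = 1$ gives $[v_i, x_i]^{w_i} = [v_i, x_i]$, so $[v_i, x_i][w_i, x_i] = [v_i w_i, x_i]$. Setting $u_i = v_i w_i$ then produces the asserted factorisation $g = [u_1, x_1] \cdots [u_n, x_n]$.

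The main obstacle is the sublemma. The claim that $z$ decomposes into \emph{exactly} $n$ commutators, one per generator, is what pins the number of factors to the rank of $N$; the rest is standard manipulation. The sublemma itself hinges on using centrality of $\gamma_r(N)$ both to linearise the commutator bracket on $\gamma_{r-1}(N)$ and to permit the free rearrangement during the regrouping by index. Once this is in hand, the induction closes cleanly using only elementary commutator identities, and the lemma follows.
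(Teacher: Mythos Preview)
Your proof is correct. The paper itself does not prove this lemma---it is quoted from \cite{AR1, MR}---but your reconstruction matches the standard argument and dovetails with what the paper uses elsewhere: your sublemma is precisely the case $A = \gamma_{r-1}(N)$ of Lemma~\ref{reh} (cited from \cite[Lemma~3]{MR}), and the inductive scheme you run (project modulo $\gamma_r$, apply the hypothesis, decompose the central error via the sublemma, merge using $[v_i w_i, x_i] = [v_i, x_i]^{w_i}[w_i, x_i]$ with $[v_i, x_i]^{w_i} = [v_i, x_i]$) is exactly the structure the paper employs in the proof of Lemma~\ref{n2} to obtain the related normal form $g = [u_1, x_1]x_1^{\alpha_1} \cdots [u_n, x_n]x_n^{\alpha_n}$ for arbitrary $g \in {\rm N}_{n,r}$.
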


In fact,  Allambergenov and Roman'kov
\cite{AR1} proved the following.

\begin{itemize}
 \item[(i)]{ Any element of the commutator subgroup ${\rm N}_{n,2}'$ is a product of
no more than $[n/2]$ commutators.}

\item[(ii)]{ Any element of the commutator subgroup ${\rm N}_{n,r}'$ in all other
cases ($r \geq 3, \  n \geq 4$ or $r > 3, \ n =2$) is a product of no more than
$n$ commutators.}
\end{itemize}
\begin{lemma}\label{reh} \cite[Lemma 3]{MR}
Let $A$ be a normal subgroup of ${\rm N}_{n, r} $. If $A$ is abelian or $A$ lies in the second center of ${\rm N}_{n, r} $, then every element of $[A, {\rm N}_{n, r} ]$ has the form $[u_1, x_1] \, [u_2, x_2] \, \ldots \, [u_n, x_n]$ for some $u_i \in A$. 
\end{lemma}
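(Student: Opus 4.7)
The plan is to handle the two hypotheses separately, exploiting in each case that $[A, {\rm N}_{n,r}] \leq A$ (by normality of $A$) and that $[A, {\rm N}_{n,r}]$ is abelian.

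For the case $A \leq Z_2({\rm N}_{n,r})$ I would observe that $[A, {\rm N}_{n,r}] \leq Z({\rm N}_{n,r})$, so every commutator $[a, g]$ with $a \in A$ and $g \in {\rm N}_{n,r}$ is central. Centrality collapses the standard commutator identities $[ab, g] = [a, g]^b [b, g]$ and $[a, gh] = [a, h][a, g]^h$ to genuine bilinearity, together with $[a, g^{-1}] = [a, g]^{-1}$ and $[a^{-1}, g] = [a, g]^{-1}$. A general element of $[A, {\rm N}_{n,r}]$ is a product $\prod_k [a_k, g_k]$; expanding each $g_k$ as a word in $x_1^{\pm 1}, \ldots, x_n^{\pm 1}$ and applying bilinearity converts the whole product into a product of terms of the form $[b, x_j]$ with $b \in A$. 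Centrality lets me reorder so that all terms with the same $x_j$ appear consecutively, and the amalgamation $[u, x_j][v, x_j] = [uv, x_j]$ then combines them into a single $[u_j, x_j]$, producing the canonical form $[u_1, x_1] \cdots [u_n, x_n]$.

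For $A$ abelian I would change to additive notation, viewing $A$ as a left $\Z[{\rm N}_{n,r}]$-module under conjugation $g \cdot a = g a g^{-1}$. The commutator $[a, g] = a^{-1} g^{-1} a g$ then corresponds to $(g^{-1} - 1) \cdot a$, so $[A, {\rm N}_{n,r}]$ corresponds to the additive subgroup $I \cdot A$ of $A$, where $I$ is the augmentation ideal of $\Z[{\rm N}_{n,r}]$. The target statement becomes the additive identity $I \cdot A = \sum_{i=1}^n (x_i - 1) A$; one inclusion is immediate. For the other I would induct on the length of a word representing $g$ in the alphabet $x_1^{\pm 1}, \ldots, x_n^{\pm 1}$: writing $g = h y$ with $y = x_j^{\pm 1}$, the decomposition
\[
(g - 1) a = h \cdot \bigl((y - 1) a\bigr) + (h - 1) a
\]
lets me apply the inductive hypothesis to $h$ twice (once with argument $a$, once with argument $(y - 1) a \in A$), reducing the remaining work to placing $(y - 1) a$ inside $\sum_i (x_i - 1) A$. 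The case $y = x_j$ is trivial; the obstacle is $y = x_j^{-1}$, which I resolve via the explicit identity $(x_j^{-1} - 1) a = (x_j - 1)(x_j^{-1} a^{-1} x_j)$, whose right-hand factor lies in $A$ by normality. Translating back to multiplicative notation and using the abelianness of $[A, {\rm N}_{n,r}]$ to freely reorder the resulting factors finishes the proof; the main obstacle throughout is precisely this last identity, without which the inductive step does not close.
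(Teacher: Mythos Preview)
The paper does not supply its own proof of this lemma; it is quoted verbatim from \cite[Lemma~3]{MR} and used as a black box, so there is no in-paper argument to compare yours against.

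That said, your proposal is correct and is essentially the standard argument one would expect to find in \cite{MR}. In the second-center case, centrality of $[A,{\rm N}_{n,r}]$ indeed collapses the commutator identities to genuine bilinearity, and your expand--reorder--amalgamate scheme via $[u,x_j][v,x_j]=[uv,x_j]$ goes through cleanly (normality of $A$ is not even needed here, only that $A$ is a subgroup). In the abelian case your module-theoretic translation is the right framework: $[A,{\rm N}_{n,r}]$ corresponds to $I\cdot A$, the target form corresponds to $\sum_i(x_i-1)A$, and since each $(x_i-1)A$ is already an additive subgroup the ``one $u_i$ per generator'' requirement is automatic. Your induction on word length is sound, and the identity $(x_j^{-1}-1)a=(x_j-1)\bigl(-x_j^{-1}\cdot a\bigr)$ correctly disposes of the inverse-generator case. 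One small point worth writing out explicitly: when you say you ``apply the inductive hypothesis to $h$ twice'', you are tacitly using $h\cdot c=(h-1)c+c$ with $c=(y-1)a$ to split $h\cdot\bigl((y-1)a\bigr)$ into an inductive term $(h-1)c$ and the leftover $(y-1)a$; this is correct, but spelling it out would make the step transparent.
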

\subsection{Preliminary Results}
Let $G = \langle A \rangle$ be a group and $\mathcal{P} = \mathcal{P}(A)$ be the
set of palindromes in $G$. Evidently,  any palindrome $p \in \mathcal{P}$
can be represented in the form
$$
p = u a^{\alpha} \overline{u},~~\mbox{for some} ~a \in A, \alpha \in \mathbb{Z},
$$
where
$$
u = a_1^{\alpha_1} a_2^{\alpha_2} \ldots a_k^{\alpha_k},~~a_i \in A, \alpha_i
\in \mathbb{Z}
$$
is a word and
$$
\overline{u} = a_k^{\alpha_k} a_{k-1}^{\alpha_{k-1}} \ldots a_1^{\alpha_1}
$$
is its reverse word.  Clearly,  ${\overline u}^{-1}= \overline {u^{-1} }$.
 
\medskip The following lemma is easy to prove. 
\begin{lemma}\label{onto}
Let $G = \langle A \rangle$ and $H = \langle B \rangle$ be two groups,
$\mathcal{P}(A)$ is the set of palindromes in the alphabet $A^{\pm 1},$
$\mathcal{P}(B)$ is the set of palindromes in the alphabet $B^{\pm 1}.$ If
$\varphi : G \longrightarrow H$ is an epimorphism such that
$\varphi(A) = B$ then
$$
{\rm pw}(H) \leq {\rm pw}(G).
$$
\end{lemma}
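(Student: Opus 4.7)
The plan is to lift palindromic factorizations from $G$ to $H$ via the epimorphism $\varphi$. Take an arbitrary element $h \in H$. Since $\varphi$ is surjective, choose a preimage $g \in G$ with $\varphi(g) = h$. By definition of the palindromic width, $g$ can be written as a product of at most $\mathrm{pw}(G)$ palindromes in the alphabet $A^{\pm 1}$, say
\[
g = p_1 p_2 \cdots p_k, \qquad k \leq \mathrm{pw}(G).
\]
Applying $\varphi$ then gives $h = \varphi(p_1)\varphi(p_2)\cdots\varphi(p_k)$, so it suffices to verify that each $\varphi(p_i)$ is a palindrome in the alphabet $B^{\pm 1}$.

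For the key step, recall from the discussion immediately preceding the lemma that every palindrome $p \in \mathcal{P}(A)$ has the shape $p = u\, a^{\alpha}\, \overline{u}$ for some $a \in A$, $\alpha \in \mathbb{Z}$ and a word $u = a_1^{\alpha_1}\cdots a_k^{\alpha_k}$ in $A^{\pm 1}$. Because $\varphi(A) = B$, each letter $a_j \in A$ maps to a letter $b_j := \varphi(a_j) \in B$, and $\varphi(a_j^{-1}) = b_j^{-1}$. Hence
\[
\varphi(p) \;=\; \varphi(u)\, \varphi(a)^{\alpha}\, \varphi(\overline{u}) \;=\; v\, b^{\alpha}\, \overline{v},
\]
where $v = b_1^{\alpha_1}\cdots b_k^{\alpha_k}$ is a word in $B^{\pm 1}$ and $b = \varphi(a) \in B$. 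Thus $\varphi(p)$ is a palindrome in $\mathcal{P}(B)$, possibly after free reduction, which can only decrease palindromic length.

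Consequently $l_{\mathcal{P}}(h) \leq k \leq \mathrm{pw}(G)$, and taking the supremum over $h \in H$ yields $\mathrm{pw}(H) \leq \mathrm{pw}(G)$. There is essentially no obstacle here; the only subtlety is the requirement $\varphi(A) = B$ (rather than just $\varphi(A) \subseteq H$), which is precisely what guarantees that the palindrome structure is preserved letter-by-letter under $\varphi$.
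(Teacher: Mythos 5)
Your proof is correct. The paper itself omits the argument entirely (it only remarks that the lemma ``is easy to prove''), and what you supply --- choosing a preimage $g$ of $h$, writing $g$ as a product of at most ${\rm pw}(G)$ palindromes, and observing that $\varphi$ carries a palindromic word $u\,a^{\alpha}\,\overline{u}$ over $A^{\pm 1}$ letter-by-letter to a palindromic word over $B^{\pm 1}$ precisely because $\varphi(A)=B$ --- is exactly the intended reasoning, with the harmless caveat that any free reduction of the image word occurs in symmetric pairs and so preserves the palindromic form.
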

For free nilpotent groups of rank $n$ we have the following set of epimorphisms
$$
{\rm N}_{n,1} \longleftarrow {\rm N}_{n,2} \longleftarrow {\rm N}_{n,3} \longleftarrow \ldots
$$
where
$$
{\rm N}_{n,1} = {\rm N}_{n,2} / \gamma_2({\rm N}_{n,2}),~~~{\rm N}_{n,2} = {\rm N}_{n,3} / \gamma_3({\rm N}_{n,3}), \ldots.
$$
Applying the above lemma we have:
\begin{cor}\label{c1}  The following
inequalities hold:

\begin{equation*}\label{e2}
{\rm pw}({\rm N}_{n,1}) \leq {\rm pw}({\rm N}_{n,2}) \leq {\rm pw}({\rm N}_{n,3}) \leq \ldots
\end{equation*}
\end{cor}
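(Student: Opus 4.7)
The plan is to apply \lemref{onto} once per link in the chain. All of the needed ingredients are already on hand: the displayed equalities just above the corollary, namely ${\rm N}_{n,r} = {\rm N}_{n,r+1}/\gamma_{r+1}({\rm N}_{n,r+1})$, supply for each $r \geq 1$ a natural quotient epimorphism $\pi_r : {\rm N}_{n,r+1} \to {\rm N}_{n,r}$.

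First I would verify the hypothesis of \lemref{onto} for $\pi_r$, namely that it carries the distinguished basis $A = \{x_1,\ldots,x_n\}$ of ${\rm N}_{n,r+1}$ onto the distinguished basis $B = \{x_1,\ldots,x_n\}$ of ${\rm N}_{n,r}$. This is essentially tautological from the construction: the free $r$-step nilpotent group on $n$ generators is by definition the quotient of the free group $F_n$ by the $(r+1)$-st term of its lower central series, so the canonical projection ${\rm N}_{n,r+1} \to {\rm N}_{n,r}$ is precisely the identity on the symbols $x_1, \ldots, x_n$ and $\pi_r(A) = B$ holds.

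Second, with this hypothesis in place, \lemref{onto} applied to $\pi_r$ yields ${\rm pw}({\rm N}_{n,r}) \leq {\rm pw}({\rm N}_{n,r+1})$. Concatenating these inequalities for $r = 1, 2, 3, \ldots$ produces the desired chain. There is no real obstacle here; the statement is a direct formal consequence of \lemref{onto}, and the only thing to check is the generator-to-generator property of the central quotients, which is immediate from the universal property of free nilpotent groups.
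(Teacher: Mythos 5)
Your proposal is correct and follows exactly the paper's own route: the paper likewise obtains the chain by applying \lemref{onto} to the natural quotient epimorphisms ${\rm N}_{n,r+1} \to {\rm N}_{n,r+1}/\gamma_{r+1}({\rm N}_{n,r+1}) = {\rm N}_{n,r}$, which send basis to basis. Your additional remark verifying the generator-to-generator hypothesis is a harmless elaboration of what the paper leaves implicit.
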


\begin{lemma} \label{l:32}
Let $G=\langle A \rangle$ be a group generated by a set $A$. Then the following
hold.
\begin{enumerate}
\item{ If $p$ is a palindrome, then for $m$ in $\mathbb Z$, $p^m$ is also a palindrome.}

\item{ Any element in $G$ which is conjugate to a product of $n$ palindromes, $n
\geq 1$,  is a product of $n$ palindromes if $n$ is even, and of $n+1$ palindromes
if $n$ is odd. }

\item{Any commutator of the type  $[u, p],$  where $p$ is a palindrome is
a product of $3$ palindromes.
Any element $[u, a^{\alpha}] a^{\beta}$, $a \in A$, $\alpha, \beta \in
\mathbb{Z},$ is a product of $3$ palindromes.}

\item{In $G$ any commutator of the type  $[u, p q],$  where $p, q$ are
palindromes is a product of $4$ palindromes.
Any element $[u, p a^{\alpha}] a^{\beta}$, $a \in A$, $\alpha, \beta \in
\mathbb{Z},$ is a product of $4$ palindromes.}
\end{enumerate}

\end{lemma}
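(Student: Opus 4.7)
The strategy is to reduce all four claims to a single sandwich identity: for any word $w$ and any palindrome $s$, the product $w \cdot s \cdot \overline{w}$ is again a palindrome, since reversal is contravariant on products, fixes $s$, and satisfies $\overline{\overline{w}}=w$. Taking $w$ to be $\overline{u}^{-1}$ (and using $\overline{\overline{u}^{-1}}=u^{-1}$) shows that $\overline{u}^{-1}\cdot s\cdot u^{-1}$ is likewise a palindrome. Part~(1) then reduces to iterating $\overline{pq}=\overline{q}\,\overline{p}$: by an easy induction $\overline{p^m}=p^m$ for $m\ge 0$ when $p$ is a palindrome, and for $m<0$ we first note that $p^{-1}$ is itself a palindrome because $\overline{p^{-1}}=\overline{p}^{\,-1}=p^{-1}$, then apply the nonnegative case to $p^{-1}$.

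For part~(2), given $g = w\,p_1 p_2 \cdots p_n\,w^{-1}$, I would group consecutive palindromes pairwise and split each pair via the sandwich identity as
\[
w\,p_{2i-1} p_{2i}\,w^{-1} \;=\; \bigl(w p_{2i-1}\overline{w}\bigr)\cdot\bigl(\overline{w}^{\,-1} p_{2i}\,w^{-1}\bigr),
\]
with both factors palindromes. For $n$ even this yields $n$ palindromes. For $n$ odd, I would pair $p_1,\ldots,p_{n-1}$ to obtain $n-1$ palindromes, then split the trailing conjugate separately as $w p_n w^{-1} = (w p_n \overline{w})\cdot(\overline{w}^{\,-1} w^{-1})$ (the second factor a palindrome with $s$ trivial), producing $n+1$ palindromes in total.

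For parts~(3) and~(4), the key move is to insert the identity $\overline{u^{-1}}\cdot\overline{u^{-1}}^{\,-1}=1$ into the commutator between the inverse-palindrome block and $u$. This gives
\[
[u,p] \;=\; \bigl(u^{-1}p^{-1}\overline{u^{-1}}\bigr)\cdot\bigl(\overline{u^{-1}}^{\,-1}u\bigr)\cdot p
\]
and
\[
[u,pq] \;=\; \bigl(u^{-1}q^{-1}\overline{u^{-1}}\bigr)\cdot\bigl(\overline{u^{-1}}^{\,-1}p^{-1}u\bigr)\cdot p\cdot q,
\]
each sandwich factor being a palindrome by the identity above, producing three and four palindromes respectively. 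The two variants $[u,a^\alpha]a^\beta$ and $[u,pa^\alpha]a^\beta$ then follow by taking $p=a^\alpha$ (respectively $q=a^\alpha$) in the displays above and absorbing the trailing $a^\beta$ into the final palindrome via $a^\alpha\cdot a^\beta = a^{\alpha+\beta}$, which is itself a palindrome.

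There is no substantial obstacle here: once the sandwich principle is isolated, every claim is a routine rearrangement. The only care required is in tracking the reversal identities $\overline{w^{-1}}=\overline{w}^{\,-1}$ and $\overline{\overline{w}}=w$ so that each sandwich factor genuinely reads the same forwards and backwards, and in the parity bookkeeping for the odd case of~(2).
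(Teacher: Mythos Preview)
Your proposal is correct and matches the paper's proof essentially line for line: the paper uses exactly your sandwich decompositions $u^{-1}p^{-1}\overline{u^{-1}}\cdot\overline{u}u\cdot p$ for part~(3) and the alternating $(u^{-1}p_{2i-1}\overline{u^{-1}})(\overline{u}\,p_{2i}\,u)$ pattern for part~(2), with the same parity bookkeeping in the odd case. The only cosmetic difference is that for part~(4) the paper reduces to part~(2) by writing $[u,pq]=(u^{-1}(pq)^{-1}u)\cdot pq$ rather than unfolding the four palindromes directly as you do, and you name the sandwich principle explicitly whereas the paper leaves it implicit.
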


\begin{proof}

1) Let $p=u a^{\alpha} \overline u$, where $u$ is as above.
Then $p^2 = u a^{\alpha} \overline u u a^{\alpha} \overline u$, $p^3= u
a^{\alpha} \overline u u a^{\alpha} \overline u u a^{\alpha} \overline u$ are
palindromes.  The result now follows by induction.

\medskip 2) Let $v = u^{-1} p u$ be conjugate to a palindrome $p$. If $\overline{u}$
 is the reverse word of $u$, then
$$
v = (u^{-1} \, p \, \overline{u^{-1}}) \cdot \overline{u} \, u
$$
and we see that $u^{-1} \, p \, \overline{u^{-1}}$ and $\overline{u} \, u$ are
palindromes.

If $v$ is the conjugate to the product of $2m$ palindromes $p_1, \cdots,
p_{2m}$, for some $u \in G$ let
$v=u^{-1} p_1 p_2 \ldots p_{2m} u$. Then
$$
v = (u^{-1} \, p_1 \, \overline{u^{-1}}) (\overline u p_2 u) (u^{-1} p_3
\overline{u^{-1}})  \ldots  \overline{u} \, p_{2m} \, u
$$
is the product of $2m$ palindromes.

If $v=u^{-1} p_1 p_2 \ldots p_{2m} p_{2m+1} u$, then
$$
v=u^{-1} p_1 \ldots p_{2m}
\overline{u^{-1}} \overline u p_{2m+1} u = u^{-1} p_1 \ldots p_{2m}
u \cdot (u^{-1} \overline{u^{-1}}) \cdot \overline u p_{2m+1} u.
$$
By part (2) above,  $u^{-1} p_1 \ldots p_{2m}
u$ is a product of $2m$ palindromes, $u^{-1} \overline{u^{-1}}$ and $\overline u p_{2m+1} u$ are palindromes.
Hence, $v$ is a product of $2m+2$ palindromes.

\medskip 3) We can check that
$$
[u, p] = u^{-1} \, p^{-1} \, u \, p = u^{-1} \, p^{-1} \, \overline{u^{-1}}
\cdot \overline{u} \, u \cdot p
$$
and $u^{-1} \, p^{-1} \, \overline{u^{-1}}$ and $\overline{u} \, u $ are
palindromes. If we take $p = a^{\alpha}$ then it is clear that
$[u, a^{\alpha}] a^{\beta}$ is the product of three palindromes.

\medskip 4) Observe that $[u, pq]=(u^{-1} (q^{-1} p^{-1})u)pq$. By (2), $u^{-1} (pq)^{-1} u$ is
a product of two palindromes,  hence the result follows.
\end{proof}

\begin{prop}
Let $G$ be a group and let an element $g$ in the center of $G$ be a product of 2 palindromes. Then for any integer
$m$ the power $g^m$ is a product of 2 palindromes.
\end{prop}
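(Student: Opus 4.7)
The plan is to exploit centrality of $g$ to force the two palindromes to commute, and then invoke \lemref{l:32}(1) to raise each to the $m$-th power.

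Write $g = p_1 p_2$ with $p_1, p_2 \in \mathcal P$. Since $g$ lies in the center of $G$, it commutes in particular with $p_2$: the equality $p_2 g = g p_2$ reads $p_2 p_1 p_2 = p_1 p_2 \cdot p_2$. Cancelling $p_2$ on the right gives $p_2 p_1 = p_1 p_2$, so $p_1$ and $p_2$ commute.

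Because $p_1$ and $p_2$ commute, $g^m = (p_1 p_2)^m = p_1^m p_2^m$ for every integer $m$. By \lemref{l:32}(1), for each integer $m$ both $p_1^m$ and $p_2^m$ are palindromes (for $m<0$ one first notes that the inverse of a palindrome $u a^\alpha \overline u$ is $\overline{u^{-1}} a^{-\alpha} u^{-1}$, which is again a palindrome, so one can apply \lemref{l:32}(1) to $p_i^{-1}$). Hence $g^m$ is a product of two palindromes, which is what was to be shown.

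There is essentially no obstacle; the only thing worth pausing over is the observation that the centrality of $g = p_1 p_2$ automatically forces $p_1$ and $p_2$ to commute with one another (not merely with $g$). Once this step is made, the proof collapses to a single application of part (1) of \lemref{l:32}.
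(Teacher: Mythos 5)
Your proof is correct and rests on the same mechanism as the paper's: centrality of $g=p_1p_2$ is used to collect the factors into $g^m=p_1^m p_2^m$, and then \lemref{l:32}(1) finishes the job. The only difference is cosmetic — you first extract the cleaner fact that $p_1$ and $p_2$ commute and so avoid the induction on $m$ that the paper carries out step by step; your parenthetical handling of negative $m$ is also a welcome touch, since the paper's proof of \lemref{l:32}(1) only treats positive exponents explicitly.
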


\begin{proof}
Let $m > 0.$ Use induction on $m$. If $g = p_1 p_2$ is a product of 2 palindromes then
$$
g^2 = p_1 p_2 \cdot g = p_1 g p_2 = p_1^2 \cdot p_2^2
$$
and by Lemma \ref{l:32}(1) $p_1^2$ and $p_2^2$ are palindromes. Assume the result for some $m$. Then
$$
g^{m+1} = (p_1 p_2)^m \cdot g = p_1^m g p_2^m = p_1^{m+1} \cdot p_2^{m+1}
$$
and by Lemma \ref{l:32}(1) $p_1^{m+1}$ and $p_2^{m+1}$ are palindromes.

If $m < 0$ then $g^{m} = (g^{-m})^{-1}$ and the result follows from the previous case and the fact that the  inverse of a  palindrome is a palindrome.
\end{proof}

\section{Proof of \thmref{mainth}}\label{fn1}

\subsection{The palindromic widths of free nilpotent groups}
Let ${\rm N}_{n,r}$ be a free
$r$-step nilpotent group of rank $n \geq 2$.  Let $x_1, x_2, \ldots, x_n$ be a basis for ${\rm N}_{n, r}$. Let $\mathcal{P}$ be the set of all
palindromes in ${\rm N}_{n,r}$. Note that an element $p \in {\rm N}_{n,r}$
is a palindrome if it can be be represented in the form
\begin{equation}\label{palin0}
p = x_{i_1}^{\alpha_1} x_{i_2}^{\alpha_2} \ldots x_{i_k}^{\alpha_k}
x_{i_{k+1}}^{\alpha_{k+1}} x_{i_k}^{\alpha_k} \ldots
x_{i_2}^{\alpha_2} x_{i_1}^{\alpha_1}
\end{equation}
where
$$
~~i_j \in \{1, 2, \ldots, n \},~~\alpha_j \in \mathbb{Z} \setminus \{ 0 \}.
$$
\begin{lemma}\label{n1} ${\rm pw}({\rm N}_{n,1})=n$.\end{lemma}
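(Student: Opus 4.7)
The plan is to prove the equality by a routine upper bound and a parity/counting argument for the lower bound.

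For the upper bound $\mathrm{pw}(N_{n,1}) \le n$, I would simply observe that any element $g \in N_{n,1}$ can be uniquely written as $g = x_1^{\alpha_1} x_2^{\alpha_2} \cdots x_n^{\alpha_n}$, and each factor $x_i^{\alpha_i}$ is a palindrome (it reads the same forwards and backwards in the alphabet $\{x_1^{\pm 1}, \ldots, x_n^{\pm 1}\}$). Dropping factors with $\alpha_i = 0$ only improves the count, so $l_{\mathcal P}(g) \le n$.

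For the lower bound, the key structural observation is that \emph{in the abelian quotient $N_{n,1}$, every palindrome has at most one variable appearing with odd exponent}. Indeed, using the form $p = u\, a^{\alpha}\, \overline u$ (with $a \in \{x_1,\ldots,x_n\}$) recalled just before Lemma~\ref{l:32}, abelianizing gives
\[
p \;=\; a^{\alpha} \cdot u\,\overline u \;=\; a^{\alpha} \cdot \prod_{j=1}^{n} x_j^{\,2 e_j(u)},
\]
where $e_j(u)$ denotes the total exponent of $x_j$ occurring in $u$. Thus, if $p = \prod_{j=1}^n x_j^{\gamma_j}$ in $N_{n,1}$, then $\gamma_a \equiv \alpha \pmod 2$ and $\gamma_j \equiv 0 \pmod 2$ for all $j \neq a$.

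To produce an element whose palindromic length attains $n$, I would test $g = x_1 x_2 \cdots x_n$, all of whose exponents are odd. Suppose $g = p_1 p_2 \cdots p_k$ with each $p_l$ a palindrome, and let $v_l = (v_{l,1},\ldots,v_{l,n}) \in \mathbb Z^n$ be the exponent vector of $p_l$. By the observation above, for each $l$ the set
\[
S_l \;=\; \{\,j : v_{l,j} \text{ is odd}\,\} \;\subseteq\; \{1,\ldots,n\}
\]
has at most one element. On the other hand, $\sum_l v_{l,j} = 1$ for every $j$, so the number of indices $l$ with $v_{l,j}$ odd must itself be odd, hence at least $1$. Since the sets $\{l : j \in S_l\}$ are pairwise disjoint (different $j$'s cannot share the same palindrome $p_l$ as contributor), summing over $j$ gives $k \ge n$. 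Therefore $l_{\mathcal P}(g) \ge n$, and combined with the upper bound we conclude $\mathrm{pw}(N_{n,1}) = n$.

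The argument is essentially mechanical; the only point that requires care is making precise the "at most one odd exponent" claim, i.e.\ verifying that the normal form $p = u\,a^{\alpha}\,\overline u$ given in the excerpt really forces the abelianized exponent vector to have the stated parity pattern. Once this is in place, the disjointness of the sets $S_l$ immediately delivers the lower bound.
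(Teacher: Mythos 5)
Your proposal is correct and follows essentially the same route as the paper: the upper bound via the normal form $x_1^{\alpha_1}\cdots x_n^{\alpha_n}$, and the lower bound via the observation that a palindrome's abelianized exponent vector has at most one odd entry, applied to $x_1x_2\cdots x_n$. The paper packages the parity argument as a homomorphism onto $(\mathbb{Z}_2)^n$, but the counting is the same.
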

\begin{proof}  In this case ${\rm N}_{n,1}$ is a free abelian group of rank $n$. Since any element $g \in {\rm N}_{n,1}$ has the form
\begin{equation}\label{palin1}
g = x_1^{\alpha_1} x_2^{\alpha_2} \ldots x_n^{\alpha_n},~~~\alpha_i \in
\mathbb{Z},
\end{equation}
then $g$ is a product of $n$ palindromes $x_i^{\alpha_i}$ and hence
$$
{\rm pw}({\rm N}_{n,1}) \leq n.
$$

To prove the equality we shall show that $l_{\mathcal{P}}(x_1 x_2 \ldots x_n) = n$. For this  define a map
$$
{\widehat{ }}: {\rm N}_{n,1} \longrightarrow \underbrace{\mathbb{Z}_2 \times \mathbb{Z}_2 \times
\ldots \times \mathbb{Z}_2}_n,
$$
where $\mathbb{Z}_2$ is a cyclic group of order $2$ by the rule
$$
\widehat{g} = (\varepsilon_1, \varepsilon_2, \ldots, \varepsilon_n),
$$
where
$$
\varepsilon_i =
\left\{\begin{array}{ll}
0 & ~\mbox{if}~\alpha_i~\mbox{ in \eqnref{palin1} is even}, \\
1 & ~\mbox{if}~\alpha_i~\mbox{in \eqnref{palin1} is odd}. \\
\end{array}
\right.
$$
Evidently, that for any $g, h \in G$ we have $\widehat{g h} = \widehat{g} +
\widehat{h}.$

If a palindrome $p$ has the form \eqnref{palin1} then
$$
\widehat{p} = (\nu_1, \nu_2, \ldots, \nu_n)
$$
contains no more than one non-zero component. On the other hand, for $w=x_1 x_2 \ldots x_n$, 
$$
\widehat{w}= (1, 1, \ldots, 1).
$$
Thus $x_1 x_2 \ldots x_n$ is a product of at least $n$ palindromes.
\end{proof}
\begin{lemma}\label{n2}
For $r \geq 2$, $n \leq {\rm pw}({\rm N}_{n,r}) \leq 3n$.\end{lemma}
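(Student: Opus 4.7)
The lower bound $n \leq {\rm pw}({\rm N}_{n,r})$ is immediate: the abelianization map ${\rm N}_{n,r} \twoheadrightarrow {\rm N}_{n,1}$ sends the given basis to a basis, so \corref{c1} together with \lemref{n1} yields ${\rm pw}({\rm N}_{n,r}) \geq {\rm pw}({\rm N}_{n,1}) = n$. For the upper bound, given $g \in {\rm N}_{n,r}$, I would start from the decomposition $g = P \cdot c$ where $P = x_1^{\alpha_1} \cdots x_n^{\alpha_n}$ (the $\alpha_i$ being the exponents of $g$ in the abelianization) and $c \in [{\rm N}_{n,r}, {\rm N}_{n,r}]$. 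The naive count---$n$ palindromes for $P$ plus $3$ palindromes for each of the $n$ commutators supplied by \lemref{ar1}---produces only the bound $4n$. To save $n$, I would exploit the second half of \lemref{l:32}(3): an element of the form $[v, x_i]\,x_i^{\alpha_i}$ is already a product of $3$ palindromes, the same cost as a bare commutator. Thus the target is to find $v_1, \ldots, v_n \in {\rm N}_{n,r}$ satisfying
\[
g \;=\; [v_1, x_1]\, x_1^{\alpha_1} \cdot [v_2, x_2]\, x_2^{\alpha_2} \cdots [v_n, x_n]\, x_n^{\alpha_n},
\]
which breaks $g$ into $n$ blocks of $3$ palindromes each.

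Collecting the factors $x_i^{\alpha_i}$ to the right in the target expression rewrites it as $\bigl(\prod_{i=1}^n [v_i, x_i]^{P_{i-1}^{-1}}\bigr)\cdot P$, where $P_i = x_1^{\alpha_1} \cdots x_i^{\alpha_i}$ and $P_0 = 1$. So the task reduces to showing the map $\Psi \colon (v_1,\ldots,v_n) \mapsto \prod_{i=1}^n [v_i, x_i]^{P_{i-1}^{-1}}$ is surjective onto $[{\rm N}_{n,r}, {\rm N}_{n,r}]$, since I need $\Psi(v) = PcP^{-1}$. I would prove surjectivity by induction along the lower central series. Modulo $\gamma_3({\rm N}_{n,r})$, conjugation by $P_{i-1}^{-1}$ acts trivially on any commutator, so $\Psi(v) \equiv \prod[v_i, x_i]$ and \lemref{ar1} solves the problem modulo $\gamma_3$. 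If $v^{(k)}$ satisfies $\Psi(v^{(k)}) \equiv PcP^{-1} \pmod{\gamma_{k+1}}$, I would replace $v_i^{(k)}$ by $v_i^{(k)} \delta_i$ with $\delta_i \in \gamma_k$; up to $\gamma_{k+2}$ the change in $\Psi$ is $\prod [\delta_i, x_i]$, and \lemref{reh} applied to $A = \gamma_k/\gamma_{k+2}$ (which lies in the second center of ${\rm N}_{n,r}/\gamma_{k+2}$, since $[\gamma_k, {\rm N}_{n,r}]=\gamma_{k+1}$ is central there) realizes every element of $\gamma_{k+1}/\gamma_{k+2}$ in that form. The iteration terminates at $k = r$ because $\gamma_{r+1}({\rm N}_{n,r}) = 1$, producing the desired $v_i$.

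The main obstacle is the surjectivity of $\Psi$: the per-block palindrome count from \lemref{l:32}(3) and the collection identity are mechanical, but lifting an approximate solution from one step of the lower central series to the next requires \lemref{reh} at every level, and only the nilpotency of ${\rm N}_{n,r}$ guarantees that this successive-approximation process terminates.
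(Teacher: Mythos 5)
Your proof is correct and follows essentially the same route as the paper: both establish the normal form $g = [u_1,x_1]\,x_1^{\alpha_1}\cdots[u_n,x_n]\,x_n^{\alpha_n}$ by induction along the lower central series (base case via Lemma~\ref{ar1}, lifting step via Lemma~\ref{reh}), and then apply Lemma~\ref{l:32}(3) to each of the $n$ blocks and Corollary~\ref{c1} with Lemma~\ref{n1} for the lower bound. Your surjectivity-of-$\Psi$ formulation is just a more explicit bookkeeping of the same induction the paper performs on the nilpotency class.
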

\begin{proof}We claim that any element $g$ in ${\rm N}_{n,r}$ can be represented in the form
$$g=[u_1, x_1]x_1^{\alpha_1} [u_2, x_2] x_2^{\alpha_2} \ldots [u_n, x_n] \, x_n^{\alpha_n}.$$
Indeed, use induction on the step of
nilpotency $r.$ If $r=2$ and $g \in {\rm N}_{n,2}$ then by  \hbox{\lemref{ar1}},
$$
g = x_1^{\alpha_1}\, x_2^{\alpha_2}\, \ldots \, x_n^{\alpha_n} \, [u_1, x_1] \, [u_2, x_2] \, \ldots \,
[u_n, x_n],~~~\alpha_i \in \mathbb{Z},~~u_i \in {\rm N}_{n,2}.
$$
But the commutators $[u_i, x_i],$ $i =1, 2, \ldots, n,$ lie in the center of ${\rm N}_{n,2}$. Hence
$$
g = [u_1, x_1] \, x_1^{\alpha_1}\, [u_2, x_2]\, x_2^{\alpha_2}\,  \ldots \,
[u_n, x_n] \, x_n^{\alpha_n}.
$$
has the required form. Let the result holds for groups  ${\rm N}_{n,r}$.  We claim that the result also holds for ${\rm N}_{n,r+1}.$ Let
$\Gamma = \gamma_{r+1}({\rm N}_{n,r+1}) = [\gamma_r({\rm N}_{n,r+1}), {\rm N}_{n,r+1}].$ Then an element $g$ of ${\rm N}_{n,r+1}$
has the form
$$
g = [u_1, x_1] \, x_1^{\alpha_1}\, [u_2, x_2]\, x_2^{\alpha_2}\,  \ldots \,
[u_n, x_n] \, x_n^{\alpha_n} \, d
$$
for some $d \in \Gamma.$ By \lemref{reh},
$$
d = [a_1, x_1] \, [a_2, x_2] \, \ldots \, [a_n, x_n],~~\mbox{for some}~ a_i \in \gamma_r({\rm N}_{n,r+1}).
$$
Since all $[a_i, x_i]$ lie in the center of ${\rm N}_{n,r+1}$ then
$$
g =
[u_1, x_1] \, [a_1, x_1] \, x_1^{\alpha_1}\, [u_2, x_2]\, [a_2, x_2] \, x_2^{\alpha_2}\,  \ldots \,
[u_n, x_n] \, [a_1, x_1]  \, x_n^{\alpha_n} =
$$
$$
= [u_1 a_1, x_1] \,  x_1^{\alpha_1}\, [u_2 a_2, x_2]\,  x_2^{\alpha_2}\,  \ldots \,
[u_n a_n, x_n]   \, x_n^{\alpha_n}
$$
has the required form.

By \lemref{l:32}(3) any element $[u_i, x_i] \, x_i^{\alpha_i}$ is a product of 3 palindromes and $g$ is a
product of $3n$ palindromes. The lower bound follows from \lemref{n1} and \corref{c1}.
\end{proof}

\subsection{The palindromic widths of $2$-step free nilpotent groups}\label{fn2}
In what follows we will consider  2-step nilpotent group
${\rm N}_{n,2}$.
We know that any palindrome has the form
$p = u x_{l}^{\beta} \overline{u}$, where
$$
u = x_{i_1}^{\alpha_1} x_{i_2}^{\alpha_2} \ldots x_{i_k}^{\alpha_k}
$$
and
$$
\overline{u} = x_{i_k}^{\alpha_k} x_{i_{k-1}}^{\alpha_{k-1}} \ldots
x_{i_1}^{\alpha_1}
$$
is its reverse. We can assume that
$$
i_1 < i_2 < \ldots < i_k
$$
and
$$
l \not\in \{ i_1,  i_2, \ldots, i_k \}.
$$
Indeed, if
$$
p = u_1 \, x_i^{\alpha_i} \, x_j^{\alpha_j} \, p_0 \, x_j^{\alpha_j} \,
x_i^{\alpha_i} \, \overline{u_1}
$$
for some palindrome $p_0$ then
$$
p = u_1 \, x_j^{\alpha_j} \, x_i^{\alpha_i} \, [x_i, x_j]^{\alpha_i \alpha_j} \,
p_0 \, x_i^{\alpha_i} \, x_j^{\alpha_j} \,
[x_j, x_i]^{\alpha_i \alpha_j} \, \overline{u_1} =
u_1 \, x_j^{\alpha_j} \, x_i^{\alpha_i} \, p_0 \, x_i^{\alpha_i} \,
x_j^{\alpha_j} \, \overline{u_1}.
$$
Hence, we can permute any two letters in a word representing $u$ and the element $p$ does not change.

Let ${\rm N}_{2,2} = \langle x, y \rangle$ be the free nilpotent group of rank 2. Any
element in this group has the form
$$
x^{\alpha} y^{\beta} [y, x]^{\gamma},~~~\alpha, \beta, \gamma \in \mathbb{Z}.
$$

\begin{lemma}
For some integers $a$ and $b$, any palindrome in ${\rm N}_{2,2}$ has one of the following forms:
$$
p_{(2a, b)} = x^{2a} \, y^b \, z^{ab},~~~p_{(a, 2b)}= x^{a} y^{2b} z^{ab}, ~\mbox{where}~z =
[y, x].
$$\end{lemma}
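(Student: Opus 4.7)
The discussion immediately preceding the lemma already does the structural work: every palindrome in $N_{n,2}$ can be written, as an element of the group, in the form $p = u\,x_l^{\beta}\,\bar u$, where $u = x_{i_1}^{\alpha_1}\cdots x_{i_k}^{\alpha_k}$ has sorted indices and $l\notin\{i_1,\ldots,i_k\}$. The plan is to specialize this reduction to $n=2$ and push the resulting expression into the standard normal form $x^{\alpha}y^{\beta}z^{\gamma}$ of $N_{2,2}$.

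When $n=2$, the exclusion $l\notin\{i_1,\ldots,i_k\}\subseteq\{1,2\}$ forces $u$ to be a power of the single generator different from $x_l$. Hence, up to which generator occupies the middle position, every palindrome has one of the two shapes
\[
p \;=\; y^{\alpha}\,x^{\beta}\,y^{\alpha} \qquad\text{or}\qquad p \;=\; x^{\alpha}\,y^{\beta}\,x^{\alpha}, \qquad \alpha,\beta\in\mathbb{Z}.
\]
The degenerate cases (empty $u$, or an even-length palindrome whose two central letters have been fused into a single power) are included by allowing $\alpha=0$ or $\beta=0$.

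The remaining step is a single commutation computation in $N_{2,2}$. Since $z=[y,x]$ is central and $yx=xy\,z$, a short induction on the exponents yields $y^{s}x^{t}=x^{t}y^{s}z^{st}$ for all integers $s,t$. Applying this to each shape gives
\[
y^{\alpha}x^{\beta}y^{\alpha} \;=\; \bigl(x^{\beta}y^{\alpha}z^{\alpha\beta}\bigr)y^{\alpha} \;=\; x^{\beta}\,y^{2\alpha}\,z^{\alpha\beta},
\]
\[
x^{\alpha}y^{\beta}x^{\alpha} \;=\; x^{\alpha}\bigl(x^{\alpha}y^{\beta}z^{\alpha\beta}\bigr) \;=\; x^{2\alpha}\,y^{\beta}\,z^{\alpha\beta}.
\]
The first expression is $p_{(a,2b)}$ with $a=\beta$, $b=\alpha$, and the second is $p_{(2a,b)}$ with $a=\alpha$, $b=\beta$, giving exactly the two forms claimed.

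No substantive obstacle is expected: the structural classification of palindromes is handled by the paragraph above the lemma, and the remaining calculation is a straightforward application of the commutation relation defining $N_{2,2}$. The only mildly delicate point is observing that the rank-$2$ cardinality constraint on the index set forces $u$ to involve only one generator, which is immediate.
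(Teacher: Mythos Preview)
Your proposal is correct and follows essentially the same approach as the paper: both use the pre-lemma reduction that a palindrome in ${\rm N}_{n,2}$ equals $u\,x_l^{\beta}\,\bar u$ with sorted indices and $l\notin\{i_1,\ldots,i_k\}$, observe that for $n=2$ this forces $u$ to be a power of the other generator, and then carry out the identical commutation computations $x^{\alpha}y^{\beta}x^{\alpha}=x^{2\alpha}y^{\beta}z^{\alpha\beta}$ and $y^{\alpha}x^{\beta}y^{\alpha}=x^{\beta}y^{2\alpha}z^{\alpha\beta}$. The paper frames the argument as an induction on syllable length whose inductive step is precisely the ``note before the lemma'', so the difference is purely presentational.
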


\begin{proof}
Let $p$ be a palindrome in ${\rm N}_{2,2}$. Induction on the syllable length of
$p$. If it is equal to 1 then $p = x^a$ or $p = y^b.$
If the syllable length is 3 then,
$$
p = x^{\alpha} y^{\beta}  x^{\alpha} = x^{2\alpha}  y^{\beta}  [y, x]^{\alpha
\beta}
$$
or
$$
p = y^{\alpha} x^{\beta}  y^{\alpha} = x^{\beta}  y^{2 \alpha}  [y, x]^{\alpha
\beta}.
$$

Using the note before the lemma, we see that all other possibilities are reduced
to these two cases.
\end{proof}

We see that if a palindrome lies in the commutator subgroup ${\rm N}_{2,2}'$ then the palindrome is
trivial. More generally, we have the following.

\begin{lemma}\label{l:1}
If a product of two palindromes lies in ${\rm N}_{2,2}'$ then this product is trivial.
\end{lemma}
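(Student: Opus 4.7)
The plan is to use the normal form for palindromes supplied by the previous lemma, observe that both cases of that lemma can be combined into a single uniform expression, and then read off the conclusion from the multiplication rule in ${\rm N}_{2,2}$.

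First I would note that the two shapes $p_{(2a,b)} = x^{2a} y^b z^{ab}$ and $p_{(a,2b)} = x^a y^{2b} z^{ab}$ can be written uniformly as
\[
p = x^\alpha y^\beta z^{\alpha\beta/2},\qquad z = [y,x],
\]
where $(\alpha,\beta) \in \mathbb{Z}^2$ with $\alpha\beta$ even (so that $\alpha\beta/2$ is an integer). This unifies the two cases and makes the subsequent bookkeeping painless.

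Next I would recall the basic relation in the 2-step nilpotent group ${\rm N}_{2,2}$: since $z = [y,x]$ is central, $y^b x^a = x^a y^b z^{ab}$ for all integers $a,b$. Using this, if $p_1 = x^{\alpha_1} y^{\beta_1} z^{\alpha_1\beta_1/2}$ and $p_2 = x^{\alpha_2} y^{\beta_2} z^{\alpha_2\beta_2/2}$ are two palindromes, a direct computation gives
\[
p_1 p_2 = x^{\alpha_1+\alpha_2} \, y^{\beta_1+\beta_2} \, z^{\alpha_1\beta_1/2 + \alpha_2\beta_2/2 + \alpha_2\beta_1}.
\]

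Finally, the hypothesis $p_1 p_2 \in {\rm N}_{2,2}'$ means that $p_1 p_2$ maps to the identity in the abelianization ${\rm N}_{2,2}/{\rm N}_{2,2}' \cong \mathbb{Z}^2$, so $\alpha_1+\alpha_2=0$ and $\beta_1+\beta_2=0$. Substituting $\alpha_2=-\alpha_1$ and $\beta_2=-\beta_1$ into the $z$-exponent yields $\alpha_1\beta_1/2 + \alpha_1\beta_1/2 - \alpha_1\beta_1 = 0$, so $p_1 p_2 = 1$.

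The whole argument is essentially mechanical once the uniform normal form for palindromes is in hand; the only mild obstacle is noting that the exponent of $z$ is always exactly $\alpha\beta/2$ in both cases of the previous lemma, which is what allows the verification of the $z$-exponent cancellation to go through symmetrically.
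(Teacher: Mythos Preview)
Your proof is correct and follows the same strategy as the paper's: use the normal form for palindromes from the previous lemma, multiply two of them, and check that the vanishing of the $x$- and $y$-exponents forces the $z$-exponent to vanish as well. Your observation that both palindrome types can be written uniformly as $x^{\alpha} y^{\beta} z^{\alpha\beta/2}$ with $\alpha\beta$ even is a neat streamlining that replaces the paper's case-by-case analysis (same-type products and mixed-type products treated separately) with a single computation.
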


\begin{proof}
We know that any palindrome has the form $p_{(2a, b)}$ or $p_{(a, 2b)}$. Consider the product of
two palindromes.  If both palindromes have the type the $p_{(2a, b)}$
then their product
$$
p_{(2a_1, b_1)}\cdot p_{(2a_2, b_2)} = x^{2a_1} y^{b_1} z^{a_1 b_1} \cdot x^{2a_2} y^{b_2} z^{a_2 b_2}
= x^{2(a_1+a_2)} y^{b_1+b_2} z^{b_1(a_1+2a_2)+a_2b_2}
$$
lies in the commutator subgroup if and only if
$$
\left\{
  \begin{array}{c}
a_1 + a_2 = 0, \\
b_1 + b_2 = 0, \\
  \end{array}
\right.
$$
or
$$
\left\{
  \begin{array}{c}
a_1 = -a_2, \\
b_1 = -b_2. \\
  \end{array}
\right.
$$
But this means that
$$
p_{(2a_1, b_1)}\cdot p_{(2a_2, b_2)}= z^{-b_2 a_2 + a_2 b_2} = z^0 = 1.
$$
The case of a product $p_{(a_1, 2b_1)} \cdot p_{(a_2, 2b_2)}$ is similar.

Consider a product of palindromes of different types:
$$
p_{(2a_1, b_1)}\cdot p_{(a_2, 2b_2)}= x^{2a_1} y^{b_1} z^{a_1 b_1} \cdot x^{a_2} y^{2b_2} z^{a_2 b_2}
= x^{2a_1+a_2} y^{b_1+2b_2} z^{a_1b_1+a_2b_2+b_1a_2}.
$$
We see that this product lies in the commutator subgroup if and only if
$$
\left\{
  \begin{array}{c}
2a_1 + a_2 = 0, \\
b_1 + 2b_2 = 0, \\
  \end{array}
\right.
$$
or
$$
\left\{
  \begin{array}{c}
a_2 = -2a_1, \\
b_1 = -2b_2. \\
  \end{array}
\right.
$$
But this means that
$$
p_{(2a_1, b_1) } \cdot p_{(a_2, 2b_2)} = z^{b_1(a_1+a_2) + a_2 b_2} = z^0 = 1.
$$
The case of the  product $p_{(a_2, 2b_2)} \cdot p_{(2a_1, b_1)}$ is similar.
\end{proof}

\begin{prop}\label{t:21}
${\rm pw}({\rm N}_{2,2}) = 3$.
\end{prop}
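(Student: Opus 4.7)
The plan is to prove the two inequalities ${\rm pw}({\rm N}_{2,2}) \leq 3$ and ${\rm pw}({\rm N}_{2,2}) \geq 3$ separately, using the normal form $g = x^{\alpha} y^{\beta} z^{\gamma}$ in ${\rm N}_{2,2}$ with $z = [y,x]$ central, together with the palindrome classification just obtained.

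For the upper bound, I will exhibit an explicit three-palindrome decomposition $g = p_1 p_2 p_3$ with
\[
p_1 = x^{\alpha - 1}, \qquad p_2 = y^{\gamma} x y^{\gamma}, \qquad p_3 = y^{\beta - 2\gamma}.
\]
Each factor is manifestly a palindromic word in $\{x, y\}$; in particular $p_2$ is of type $p_{(1, 2\gamma)}$ and equals $x y^{2\gamma} z^{\gamma}$ in normal form. Multiplying out and using the centrality of $z$ together with the commutation rule $y^B x^A = x^A y^B z^{AB}$, one checks directly that $p_1 p_2 p_3 = x^{\alpha} y^{\beta} z^{\gamma} = g$. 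The key idea is to pack the entire commutator contribution $z^{\gamma}$ into a single well-chosen palindrome $p_2$: the naive split $g = x^{\alpha} \cdot y^{\beta} \cdot z^{\gamma}$ is useless, because by Lemma \ref{l:32}(3) the term $z^{\gamma}$ by itself already costs three palindromes.

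For the lower bound, it suffices to produce one element with palindromic length at least $3$, and the natural candidate is $z = [y,x]$ itself. Since $z$ is a non-trivial element of ${\rm N}_{2,2}'$, the preceding Lemma \ref{l:1} (a product of two palindromes landing in ${\rm N}_{2,2}'$ must be trivial) forbids $z$ from being expressible as a product of two palindromes. The palindrome classification likewise shows $z$ is not a single palindrome: in either form $p_{(2a, b)} = x^{2a} y^b z^{ab}$ or $p_{(a, 2b)} = x^a y^{2b} z^{ab}$, equating with $x^0 y^0 z^1$ forces $a = b = 0$, which makes the $z$-exponent $ab = 0 \neq 1$. Hence $l_{\mathcal P}(z) \geq 3$, and combined with the upper bound this yields ${\rm pw}({\rm N}_{2,2}) = 3$.

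The only real hurdle is guessing the packing $p_2 = y^{\gamma} x y^{\gamma}$ in the upper bound; once it is written down, everything reduces to a routine normal-form computation. The lower bound is essentially immediate from Lemma \ref{l:1} and the palindrome classification.
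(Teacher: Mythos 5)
Your proof is correct and follows the same overall strategy as the paper: an explicit three-palindrome decomposition for the upper bound, and Lemma \ref{l:1} (plus the palindrome classification) to show $z=[y,x]$ has palindromic length at least $3$. The lower-bound halves are identical. The only difference is the choice of decomposition for the upper bound: the paper writes $x^{\alpha}y^{\beta}z^{\gamma}$ as $(x^{\alpha}y^{\beta-\gamma}x^{\alpha})\cdot x^{-\alpha-2}\cdot(xy^{\gamma}x)$, packing the commutator contribution into the conjugate $x^{-1}y^{\gamma}x$ split across the last two factors, whereas you use $x^{\alpha-1}\cdot(y^{\gamma}xy^{\gamma})\cdot y^{\beta-2\gamma}$, absorbing all of $z^{\gamma}$ into the single palindrome $y^{\gamma}xy^{\gamma}=xy^{2\gamma}z^{\gamma}$. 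Both verify immediately from the normal form; yours is arguably slightly cleaner since each factor is a power of a generator or a visibly symmetric word, and it sidesteps the small typo in the paper's displayed first factor (which should read $x^{\alpha}y^{\beta-\gamma}x^{\alpha}$ rather than $x^{\alpha}y^{\beta-\gamma}y^{\gamma}x^{\alpha}$). One cosmetic point: your aside that $z^{\gamma}$ ``costs three palindromes'' does not follow from Lemma \ref{l:32}(3), which only gives an upper bound; the exact cost is again a consequence of Lemma \ref{l:1}. This does not affect the proof.
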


\begin{proof}
 Note that $[y, x]$ is an element in the center of ${\rm N}_{2,2}$. Note that
\begin{eqnarray*}
 x^{\alpha} y^{\beta} [y, x]^{\gamma} &=& x^{\alpha} y^{\beta-\gamma} y^{\gamma}
[y, x]^{\gamma}\\
 &=& x^{\alpha} y^{\beta-\gamma} (x^{-1} y x)^{\gamma}. \\
\end{eqnarray*}
It follows that
$$
x^{\alpha} y^{\beta} z^{\gamma} = x^{\alpha}  y^{\beta-\gamma}
y^{\gamma}x^{\alpha} \cdot x^{-\alpha-2} \cdot x y^{\gamma} x.
$$
Hence, ${\rm pw}({\rm N}_{2,2}) \leq 3.$ On the other hand, it follows from Lemma \ref{l:1} that
$z$ is not a product of $2$ palindromes. Hence ${\rm pw}({\rm N}_{2,2}) \geq 3.$
\end{proof}

In the general case we can prove

\begin{lemma}\label{n3}
Any element in ${\rm N}_{n,2},$ $n \geq 2$ is a product of at most $3(n-1)$ palindromes.
\end{lemma}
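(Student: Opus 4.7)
The plan is to proceed by induction on $n$, with base case $n=2$ given by \propref{t:21} (which says ${\rm pw}({\rm N}_{2,2}) = 3 = 3(2-1)$).

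For the inductive step from $n-1$ to $n$ (with $n \geq 3$), I would exploit the normal form \eqnref{fr1} together with the fact that every commutator in ${\rm N}_{n,2}$ is central. This lets me rearrange the normal form of an arbitrary $g \in {\rm N}_{n,2}$ so that all syllables involving $x_n$ appear on the right:
$$
g \;=\; \underbrace{\left(\prod_{i=1}^{n-1} x_i^{\alpha_i}\right) \cdot \prod_{1 \leq j < i \leq n-1} [x_i, x_j]^{\beta_{ij}}}_{g_1} \cdot \underbrace{x_n^{\alpha_n} \cdot \prod_{j=1}^{n-1} [x_n, x_j]^{\beta_{nj}}}_{g_2}.
$$
The factor $g_1$ lies in the subgroup generated by $x_1, \ldots, x_{n-1}$, which is naturally isomorphic to ${\rm N}_{n-1,2}$. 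By the inductive hypothesis, $g_1$ is a product of at most $3(n-2)$ palindromes in the letters $x_1, \ldots, x_{n-1}$, and such words are palindromes in the enlarged alphabet as well.

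It remains to show that $g_2$ is a product of at most $3$ palindromes, which would yield the total $3(n-2) + 3 = 3(n-1)$. Since ${\rm N}_{n,2}$ has nilpotency class $2$, the commutator is bilinear, so $\prod_{j < n}[x_n, x_j]^{\beta_{nj}} = [x_n, w]$ where $w = \prod_{j<n} x_j^{\beta_{nj}}$. Centrality of commutators together with $[x_n, w] = [w^{-1}, x_n]$ gives
$$
g_2 \;=\; x_n^{\alpha_n}\,[x_n, w] \;=\; [w^{-1}, x_n]\, x_n^{\alpha_n},
$$
which is exactly of the form $[u, a^{\alpha}]\, a^{\beta}$ covered by \lemref{l:32}(3), hence a product of $3$ palindromes.

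There is no serious obstacle: the inductive step reduces, by centrality of $[{\rm N}_{n,2}, {\rm N}_{n,2}]$, to the already-established case handled by \lemref{l:32}(3). The only points requiring minor care are (i) verifying that all rearrangements of commutators past generators are valid because of class-$2$ nilpotency, and (ii) noting that palindromes in $\{x_1, \ldots, x_{n-1}\}^{\pm 1}$ remain palindromes in $\{x_1, \ldots, x_n\}^{\pm 1}$, so the inductive hypothesis transfers cleanly.
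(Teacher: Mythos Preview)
Your proof is correct and follows essentially the same strategy as the paper: the paper also splits $g$ into $n-1$ blocks, each worth $3$ palindromes, using \lemref{l:32}(3) for $n-2$ of them and \propref{t:21} for the remaining $N_{2,2}$-piece. The only cosmetic difference is that the paper carries this out directly (collecting the commutators $[x_i,x_j]$ by the smaller index $j$) rather than by induction on $n$ (peeling off the largest index), so when your induction is unrolled the two arguments coincide.
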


\begin{proof}
Let $g \in {\rm N}_{n,2}.$ Then $g$ has the form
$$
g = x_1^{\alpha_1} x_2^{\alpha_2} \ldots  x_n^{\alpha_n}
\prod_{1\leq j < i\leq n} [x_i, x_j]^{\gamma_{ij}}
$$
for some integers $\alpha_i$ and $\gamma_{ij}.$ Using the commutator  identities (see   e.g. 
 \cite{MKS}) we have
$$
\prod_{1\leq j < i\leq n} [x_i, x_j]^{\gamma_{ij}} =
[x_n^{\gamma_{n1}} x_{n-1}^{\gamma_{n-1,1}} \ldots x_2^{\gamma_{21}}, x_1]
[x_n^{\gamma_{n2}} x_{n-1}^{\gamma_{n-1,2}} \ldots x_3^{\gamma_{32}}, x_2] \ldots
$$
$$
[x_n^{\gamma_{n,n-2}} x_{n-1}^{\gamma_{n-1,n-2}}, x_{n-2}]
[x_n^{\gamma_{n,n-1}}, x_{n-1}]
$$
Since, the commutator subgroup ${\rm N}_{n,2}'$ is equal to the center of ${\rm N}_{n,2}$ then
$$
g =
[x_n^{\gamma_{n1}} x_{n-1}^{\gamma_{n-1,1}} \ldots x_2^{\gamma_{21}}, x_1]x_1^{\alpha_1} \cdot
[x_n^{\gamma_{n2}} x_{n-1}^{\gamma_{n-1,2}} \ldots x_3^{\gamma_{32}}, x_2] x_2^{\alpha_2} \cdot \ldots \cdot
$$
$$
[x_n^{\gamma_{n,n-2}} x_{n-1}^{\gamma_{n-1,n-2}}, x_{n-2}] x_{n-2}^{\alpha_{n-2}} \cdot x_{n-1}^{\alpha_{n-1}}
[x_n^{\gamma_{n,n-1}}, x_{n-1}] x_n^{\alpha_n}.
$$
By Lemma \ref{l:32}(2) any element
$$
[x_n^{\gamma_{n1}} x_{n-1}^{\gamma_{n-1,1}} \ldots x_2^{\gamma_{21}}, x_1]x_1^{\alpha_1},~~~
[x_n^{\gamma_{n2}} x_{n-1}^{\gamma_{n-1,2}} \ldots x_3^{\gamma_{32}}, x_2] x_2^{\alpha_2},~~\ldots,
[x_n^{\gamma_{n,n-2}} x_{n-1}^{\gamma_{n-1,n-2}}, x_{n-2}] x_{n-2}^{\alpha_{n-2}}
$$
is a product of 3 palindromes. Elements $x_{n-1}$ and $x_n$ generate a group which
is isomorphic to ${\rm N}_{2,2}$ and by  \propref{t:21}, the element $x_{n-1}^{\alpha_{n-1}}
[x_n^{\gamma_{n,n-1}}, x_{n-1}] x_n^{\alpha_n}$ is a product of 3 palindromes. Hence, $g$ is a product
of $3(n-1)$ palindromes.
\end{proof}

 \subsection{Proof of \thmref{mainth}}
\thmref{mainth} is obtained by combining  \lemref{n1}, \lemref{n2} and \lemref{n3}.

 \subsection{The palindromic widths of free abelian-by-nilpotent groups}
\begin{prop}\label{fabn}

Let $G$ be a non-abelian free abelian-by-nilpotent group of rank $n$.  Then  ${\rm pw}(G) \leq 5n$.
\end{prop}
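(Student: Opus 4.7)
The plan is to mimic the proof of Lemma~\ref{n2} in the abelian-by-nilpotent setting, splitting $g\in G$ into a piece living modulo the abelian kernel and a residual piece inside that kernel.

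Let $A\triangleleft G$ be the distinguished normal abelian subgroup, so that $Q:=G/A$ is a free nilpotent group of rank $n$. Choose generators $x_1,\ldots,x_n$ of $G$ whose images in $Q$ form a free basis. Projecting $g\in G$ to $\bar g\in Q$ and applying the normal-form decomposition from the proof of Lemma~\ref{n2} gives $\bar g=\prod_{i=1}^n [\bar u_i,\bar x_i]\bar x_i^{\alpha_i}$; lifting back to $G$ yields
\[
g=\prod_{i=1}^n [u_i,x_i]\,x_i^{\alpha_i}\cdot a
\]
for some $a\in A$. Lemma~\ref{l:32}(3) turns each of the $n$ blocks $[u_i,x_i]\,x_i^{\alpha_i}$ into a product of $3$ palindromes, supplying $3n$ palindromes for the first product.

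The remaining task is to express $a\in A$ as a product of at most $2n$ palindromes. Because $A$ is abelian and normal in the $n$-generated group $G$, the proof of Lemma~\ref{reh} carries over (it uses only these two hypotheses, not the nilpotency of the ambient group) and yields a representation of $a$ in the form $\prod_{i=1}^n [w_i,x_i]$ with $w_i\in A$, up to a residue in $A/[A,G]$ that can be absorbed into the $x_i^{\alpha_i}$ factors of the first product. Since each $[w_i,x_i]$ lies in abelian $A$, the identity $[a,c][b,c]=[ab,c]$ (valid whenever $b$ commutes with $[a,c]$, which is automatic here) allows one of the three palindromes per commutator in the bound from Lemma~\ref{l:32}(3) to be absorbed into the adjacent $[u_i,x_i]$-block after migrating $[w_i,x_i]$ through the intervening $x_j^{\alpha_j}$ by conjugation. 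This leaves only $2$ palindromes per factor $[w_i,x_i]$ to count separately, giving the $2n$-palindrome bound for $a$ and the overall bound $\operatorname{pw}(G)\le 3n+2n=5n$.

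The main obstacle, I expect, is making this absorption step rigorous: one must track how each $[w_i,x_i]$ is transformed when moved past intervening factors $x_j^{\alpha_j}$ (its conjugate remains in $A$ by normality, but its form is shifted by a conjugation that, modulo $A$, involves $[x_i,x_j^{\alpha_j}]$-type corrections), and then invoke the abelian commutator identity to merge it with $[u_i,x_i]$ within the $i$-th block. Establishing the non-nilpotent analog of Lemma~\ref{reh}, and controlling the residue in $A/[A,G]$, are the other principal technical points.
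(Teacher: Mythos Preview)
Your approach diverges from the paper's and has a genuine gap. The paper does not attempt to mimic Lemma~\ref{n2}; instead it quotes directly a normal form from \cite[Theorem~2]{MR}: every $g\in G$ can be written as
\[
g = x_1^{\alpha_1}\cdots x_n^{\alpha_n}\,[u_1,x_1]^{a_1}\cdots [u_n,x_n]^{a_n},\qquad u_i\in G,\ a_i\in A,
\]
where the exponent $a_i$ denotes conjugation. Then the count is immediate: $n$ palindromes for the powers $x_i^{\alpha_i}$; each $[u_i,x_i]$ is a product of $3$ palindromes by Lemma~\ref{l:32}(3); and a conjugate of a product of $3$ palindromes is a product of $4$ palindromes by Lemma~\ref{l:32}(2). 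Total $n+4n=5n$. No absorption tricks or commutator merging are needed.

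In your route, the step ``the residue in $A/[A,G]$ can be absorbed into the $x_i^{\alpha_i}$ factors'' does not work. The exponents $\alpha_i$ are fixed by the image of $g$ in the abelianization of $G$, while the residue lives in $A\subseteq G'$; changing $\alpha_i$ changes the abelianization, so it cannot compensate for an element of $A$. Moreover, varying the lifts $u_i$ only alters $a$ by elements of $[A,G]$, so the class of $a$ in $A/[A,G]$ is an invariant you cannot dispose of this way. Your subsequent merging step also fails as stated: the identity $[u_i,x_i][w_i,x_i]=[u_iw_i,x_i]$ requires $w_i$ to commute with $[u_i,x_i]$, but $[u_i,x_i]\in G'$ need not lie in the abelian subgroup $A$ (since $G/A$ is only nilpotent, not abelian), so commutation with $w_i\in A$ is not automatic. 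The cited result from \cite{MR} packages exactly the decomposition you are trying to build by hand, with the residual $A$-contribution appearing as the conjugators $a_i$ rather than as a separate tail factor.
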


\begin{proof}
Let  $G$ be a non-abelian free abelian-by-nilpotent group with a basis $x_1, x_2, \ldots, x_n$.
Let $A$ be an abelian normal subgroup of $G$ such that $G / A$ is nilpotent.
It follows from \cite[Theorem 2]{MR} that any element $g \in G$ has the form
$$
g = x_1^{\alpha_1}\, x_2^{\alpha_2}\, \ldots \, x_n^{\alpha_n} \, [u_1, x_1]^{a_1} \, [u_2, x_2]^{a_2} \, \ldots \,
[u_n, x_n]^{a_n},~~~\alpha_i \in \mathbb{Z},~~u_i \in G,~~a_i \in A.
$$
By (3) of \lemref{l:32} any commutator $[u_i, x_i]$ is a product of 3 palindromes,  thus
by (2) of Lemma \ref{l:32}, any commutator $[u_i, x_i]^{a_i}$ is a product of 4 palindromes.
Hence, $g$ is a product of $n + 4n = 5n$ palindromes.
\end{proof}
Since free metabelian groups are free abelian-by-abelian groups, hence we have the following. 
\begin{cor}
The palindromic width of a finitely generated free metabelian group is finite. 
\end{cor}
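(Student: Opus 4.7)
The plan is to reduce the problem to the previously established Lemma \ref{l:32} by putting an arbitrary element of $G$ into a normal form in which each factor is already under control. Specifically, I would begin by invoking the Magnus--Remeslennikov style normal form \cite[Theorem 2]{MR}, which guarantees that every $g\in G$ can be written as
$$
g = x_1^{\alpha_1}\, x_2^{\alpha_2}\, \ldots \, x_n^{\alpha_n} \, [u_1,x_1]^{a_1}\,[u_2,x_2]^{a_2}\,\ldots\,[u_n,x_n]^{a_n},
$$
with $\alpha_i\in\mathbb{Z}$, $u_i\in G$ and $a_i\in A$, where $A$ is the abelian normal subgroup with $G/A$ nilpotent. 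The only subtlety in applying this result is correctly interpreting the exponent $a_i$ as conjugation by $a_i\in A$; this is the interpretation consistent with the invocation of Lemma \ref{l:32}(2) in the bookkeeping below.

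Next I would count the palindromic contributions of each piece. Each monomial $x_i^{\alpha_i}$ is manifestly a palindrome, accounting for $n$ palindromes. Then, for each commutator $[u_i,x_i]$, Lemma \ref{l:32}(3) expresses it as a product of exactly three palindromes. Since the conjugate $[u_i,x_i]^{a_i} = a_i^{-1}[u_i,x_i]a_i$ is a conjugate of an (odd) product of three palindromes, Lemma \ref{l:32}(2) upgrades this to a product of $3+1=4$ palindromes. Summing over $i=1,\ldots,n$, the conjugated commutators contribute $4n$ palindromes.

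Adding the two contributions yields $n + 4n = 5n$ palindromes, which is the desired bound ${\rm pw}(G)\leq 5n$. The main obstacle I anticipate is purely expository: making sure the reader sees that Lemma \ref{l:32}(2) applies in the odd case $n=3$ (producing the $+1$ loss from $3$ to $4$), and confirming that the normal form from \cite{MR} genuinely applies to the free abelian-by-nilpotent case of rank $n$. No new combinatorial work on palindromes is needed beyond what Lemma \ref{l:32} already provides; the entire argument is a clean bookkeeping exercise on top of the normal form.
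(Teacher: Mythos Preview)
Your proposal is correct and follows exactly the paper's approach: what you have written is essentially a verbatim reproduction of the proof of Proposition~\ref{fabn} (the bound ${\rm pw}(G)\le 5n$ for free abelian-by-nilpotent groups via \cite[Theorem~2]{MR} and Lemma~\ref{l:32}(2),(3)), from which the paper derives this Corollary in one line by observing that a free metabelian group is free abelian-by-abelian and hence abelian-by-nilpotent. The only cosmetic slip is the attribution ``Magnus--Remeslennikov'': the reference \cite{MR} here is Akhavan-Malayeri--Rhemtulla.
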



\begin{thebibliography}{99}
\bibitem{R}
A.~H.~Rhemtulla, {\it A problem of bounded expressibility in free groups}, Proc.
Cambridge Philos. Soc., 64 (1969), 573--584.

\bibitem{MR} M.~Akhavan-Malayeri, A.~Rhemtulla,
{\it Commutator length of abelian-by-nilpotent groups}. Glasg. Math. J. 40 (1998), no.1,
117--121.

\bibitem{AR}
Kh.~S.~Allambergenov, V.~A.~Roman'kov,  {\it Products of commutators in groups},
(Russian) Dokl. Akad. Nauk UzSSR, 1984, no. 4, 14--15.

\bibitem{AR1}
Kh.~S.~Allambergenov, V.~A.~Roman'kov, {\it On products of commutators in groups},
(Russian) Depon. VINITI, 1985, no. 4566-85, 20 pp.

\bibitem{B}
V.~Bardakov, {\it Computation of commutator length in free groups} (Russian),  Algebra
Log., \textbf{39} (2000), no. 4, 395--440;
translation in Algebra and Logic \textbf{39} (2000), no. 4, 224--251.

\bibitem{BST}
V.~Bardakov, V.~Shpilrain, V.~Tolstykh, {\it On the palindromic and primitive widths
of a free group},  J.  Algebra, \textbf{285} (2005),
574--585.

\bibitem{BT}
V.~Bardakov, V.~Tolstykh, {\it The palindromic width of a free product of groups},  J.
Aust. Math. Soc., \textbf{81} (2006), no. 2, 199--208.

\bibitem{col} D. Collins, {\it Palindromic automorphism of free groups}, in: Combinatorial and Geometric Group
Theory,  London Math. Soc. Lecture Note Ser., vol. 204, Cambridge Univ. Press, Cambridge, 1995,  63 -- 72.

\bibitem{gj} H. Glover, C. Jensen, {\it Geometry for palindromic automorphisms of free groups}, Comment. Math. Helv. 75 (2000),  644--667.

\bibitem{gk1} J. Gilman, L. Keen, {\it Enumerating palindromes and primitives in rank two free
groups}, J. Algebra 332 (2011), 1--13.

\bibitem{gk2} J. Gilman, L. Keen, {\it Discreteness criteria and the hyperbolic geometry of palindromes},
Conform. Geom. Dyn. 13 (2009), 76--90.

%\bibitem{H} M. Hall, The theory of groups, New York: Macmillan, 1959.

\bibitem{hel}  H. Helling, {\it A note on the automorphism group of the rank two free group} , J. Algebra 223 (2000), 610--614.

\bibitem{P} A. Piggott, {\it Palindromic primitives and palindromic bases
in the free group of rank two}, J. Algebra 304 (2006), 359--366.

\bibitem{MKS}
W. Magnus, A. Karrass, D. Solitar, Combinatorial group theory: Presentations of
Groups in Terms of Generators and Relations,
Interscience Publishers, New York, 1996.

\end{thebibliography}
\end{document}